\newtheorem{theorem}{Theorem}
\newtheorem{corollary}[theorem]{Corollary}
\newtheorem{example}[theorem]{Example}
\newtheorem{proposition}[theorem]{Proposition}
\begin{document}
\title{Selections and their Absolutely Continuous Invariant Measures}
\thanks{The research of the authors was supported by NSERC grants. }
\subjclass[2000]{37A05, 37H99, 60J05}
\date{\today }
\keywords{multi valued maps, selections of multivalued maps, random maps, absolutely continuous invariant measures}

\author[A. Boyarsky]{Abraham Boyarsky }
\address[A. Boyarsky]{Department of Mathematics and Statistics, Concordia University,
1455 de Maisonneuve Blvd. West, Montreal, Quebec H3G 1M8, Canada}
\email[A. Boyarsky]{boyar@alcor.concordia.ca}

\author[P. G\'ora]{Pawe\l\ G\'ora }
\address[P. G\'ora]{Department of Mathematics and Statistics, Concordia University,
1455 de Maisonneuve Blvd. West, Montreal, Quebec H3G 1M8, Canada}
\email[P. G\'ora]{pawel.gora@concordia.ca}

\author[Zh. Li]{Zhenyang Li }
\address[Zh. Li]{Department of Mathematics and Statistics, Concordia University,
1455 de Maisonneuve Blvd. West, Montreal, Quebec H3G 1M8, Canada}
\email[Zh. Li]{zhenyangemail@gmail.com}

\begin{abstract}
Let $I=[0,1]$ and consider disjoint closed regions $G_{1},....,G_{n}$ in $%
I\times I$ and subintervals $I_{1},......,I_{n},$ such that $G_{i}$ projects
onto $I_{i.}$ We define the lower and upper maps $\tau _{1},$ $\tau _{2}$ by
the lower and upper boundaries of $G_{i},i=1,....,n,$ respectively. We
assume $\tau _{1}$, $\tau _{2}$ to be piecewise monotonic and preserving
continuous invariant measures $\mu _{1}$ and $\mu _{2}$, respectively. Let $%
F^{(1)}$ and $F^{(2)}$ be the distribution functions of $\mu _{1}$ and $\mu
_{2}.$ The main results shows that for any convex combination $F$ of $%
F^{(1)} $ and $F^{(2)}$ we can find a map $\eta $ with values between the
graphs of $\tau _{1}$ and $\tau _{2}$ (that is, a selection) such that $F$
is the $\eta $-invariant distribution function. Examples are presented. We
also study the relationship of the dynamics of multi-valued maps to random
maps.
\end{abstract}

\maketitle

 {Department of Mathematics and Statistics, Concordia University,
1455 de Maisonneuve Blvd. West, Montreal, Quebec H3G 1M8, Canada}

 \smallskip

E-mails: {boyar@alcor.concordia.ca}, {pawel.gora@concordia.ca},
{zhenyangemail@gmail.com}.

\section{INTRODUCTION}

A multivalued discrete time dynamical system is specified by a map $\Gamma $
whose image at any point in $I$ is a measurable subset of $I=[0,1].$ The
graph of $\Gamma $ is the set: $G=$ $\{(x,y)\in I$ $X$ $I$ $\left\vert y\in
\Gamma (x)\}.\right. $ Such maps have application in economics \cite{Aum},
modeling, and rigorous numerics \cite{Kac} and in dynamical systems \cite%
{Art2000,Art2004}. The objective of this note is to study maps whose graphs
are inside $G$ and which possess absolutely continuous invariant measures
(acim).

We assume $G$ consists of disjoint closed regions $G_{1},....,G_{n}$ in $I$ $%
X$ $I$ where $G_{i}$ projects onto the subinterval $I_{j}$. We define $\tau
_{1},$ $\tau _{2}$ to be the lower and upper boundaries of $G$ and assume $%
\tau _{1},$ $\tau _{2}$ to be in the class of piecewise expanding, piecewise
$C^{2}$ maps from $I$ into $I.$ Thus, $\tau _{1},\tau _{2}$ have acims with
probability density functions (pdf), $f_{1}$ and $f_{2}.$

Motivating examples are presented in section 2. The first constructs a
selection with desired properties in the case where the upper and lower maps
are piecewise linear. The second example shows that if the class of
transformations is restricted only to the graphs of the lower and upper maps
$\tau _{1}$ and $\tau _{2}$, that is, the the set $G$ consists only of the
graphs of these two maps, then there is no transformation that has pdf equal
to a convex combination of $f_{1}$ and $f_{2}.$

In Section 4 we present the main result: assuming that the lower edge and
upper edge maps $\tau _{1}$ and $\tau _{2}$ are piecewise monotonic and that
their invariant distribution functions $F^{(1)}$ and $F^{(2)}$ are
continuous and for any $0<\lambda <1$ the convex combination $F=\lambda
F^{(1)}+(1-\lambda )F^{(2)}$ is a homeomorphism of the unit interval, then
there exists a piecewise monotonic selection $\eta $, $\tau _{1}\leq \eta
\leq \tau _{2}$, preserving the distribution function $F$.

In Section 5 we present an approach to finding selections based on
conjugation : if $\tau _{1}$ is piecewise linear and the $\tau _{2}$ is
conjugated to $\tau _{1}$ then, for any convex combination $f$ of $f_{1}$
and $f_{2}$ we can find a map $\tau $ with values between the graphs of $%
\tau _{1}$ and $\tau _{2}$ such that $f$ is the pdf associated with $\tau .$%
In fact $\tau $ is also a conjugacy of $\tau _{1\text{ }}.$ A similar result
can be proved for more general maps which are shaped like the tent map. In
Section 6 we study the relationship between the dynamics of multi-valued maps
and random maps. In particular, we consider a multi-valued map consisting of two graphs,
 and show that in general the statistical long term behaviour of an arbitrary
map between the graphs of the multi-valued map cannot be achieved by a
position dependent random map based on the maps defining the multi-valued map.
A number of positive examples are also presented.

\section{Motivating example}

Let us consider a multi-valued map $T$ with lower edge selection $\tau_1$
and upper edge selection $\tau_2$ as in Figure \ref{fig:convex_counter}. If $%
\tau_1$ preserves a density $f_1$ and $\tau_2$ preserves a density $f_2$,
then we ask whether for any convex combination $f=\alpha\cdot
f_1+(1-\alpha)\cdot f_2$, $0<\alpha<1$, we can find a selection of $T$ which
preserves the density $f$. We present a counter example showing that if $%
T=\{\tau_1,\tau_2\}$ ($T$ is two-valued), then it maybe impossible.

\begin{figure}[h]
\centering
\includegraphics[bb=20 118 575
673,width=3.92in,height=3.92in,keepaspectratio]{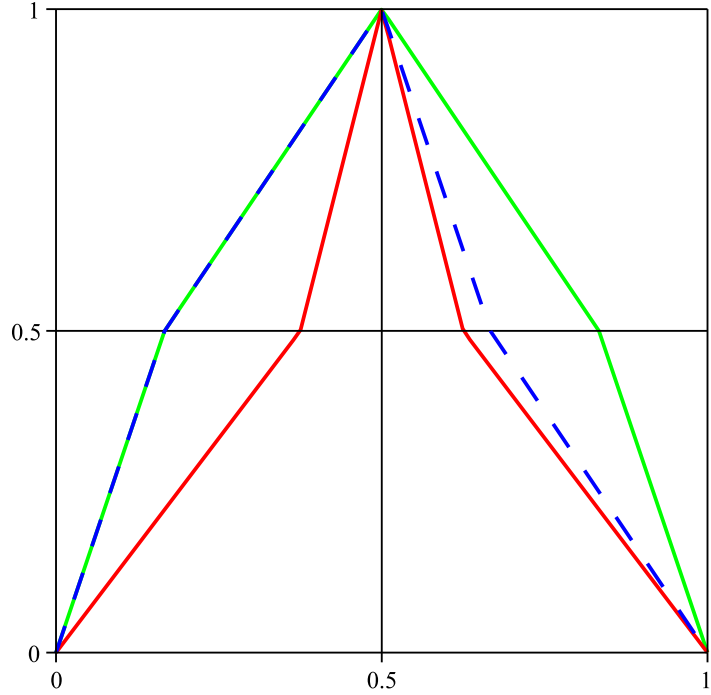}
\caption{Two valued map of Example \protect\ref{counterEX}}
\label{fig:convex_counter}
\end{figure}

\begin{example}
\label{counterEX}
\end{example}

Let
\begin{equation*}
\tau_1(x)=%
\begin{cases}
\frac 4 3 x , & \ \ 0\le x<\frac 3 8 \ ; \\
4 x-1 , & \ \ \frac 38\le x<\frac 12 \ ; \\
-4 x +3 , & \ \ \frac 12 \le x<\frac 58 \ ; \\
-\frac 43 x +\frac 43 , & \ \ \frac 58 \le x\le 1 \ ,%
\end{cases}%
\end{equation*}
and
\begin{equation*}
\tau_2(x)=%
\begin{cases}
3 x , & \ \ 0\le x<\frac 16 \ ; \\
\frac 32 x+\frac 14 , & \ \ \frac 16\le x<\frac 12 \ ; \\
-\frac 32 x +\frac 74 , & \ \ \frac 12 \le x<\frac 56 \ ; \\
-3 x +3 , & \ \ \frac 56 \le x\le 1 \ .%
\end{cases}%
\end{equation*}
The invariant densities are $f_1= \frac 32\chi_{[0,1/2]}+\frac
12\chi_{[1/2,1]}$ and $f_2= \frac 23\chi_{[0,1/2]}+\frac 43\chi_{[1/2,1]}$,
correspondingly. Thus, the Lebesgue measure density is a convex combination
of $f_1$ and $f_2$, $1=\frac 25\cdot f_1+ \frac 35\cdot f_2$. For a two
branch map $\tau$ to leave Lebesgue measure invariant it is necessary to
satisfy
\begin{equation*}
|\tau^{\prime -1}_1(x)|+ |\tau^{\prime -1}_2(x)|=1 \ ,
\end{equation*}
at the preimages $\tau^{-1}_1(x)$ and $\tau^{-1}_2(x)$ of every point $x$,
which is impossible.

\textbf{Remark:} If we allow at least one of the branches of the map $\tau$
to be between the maps $\tau_1$ and $\tau_2$, then we can achieve the
invariance of Lebesgue measure. For example map given below preserves
Lebesgue measure. Its graph is shown in Figure \ref{fig:convex_counter}
using dashed lines.
\begin{equation*}
\tau(x)=%
\begin{cases}
\tau_2(x) , & \ \ 0\le x<\frac 12 \ ; \\
-3 x +\frac 52 , & \ \ \frac 12 \le x<\frac 46 \ ; \\
-\frac 32 x +\frac 32 , & \ \ \frac 46 \le x\le 1 \ .%
\end{cases}%
\end{equation*}

\section{Another motivating example for "tent" like maps}

We assume that both maps $\tau_1$, $\tau_2$ are increasing on $[0,1/2]$ and
decreasing on $[1/2,1]$ and have value 0 at 0 and 1 and value 1 at $1/2$. We
do not assume here that lower map $\tau_1$ is conjugated to the upper map $%
\tau_2$. Let us assume that $\tau_1$ preserves measure $\mu_1$ and $\tau_2$
preserves measure $\mu_2$, not necessarily absolutely continuous. Let $%
F^{(1)},F^{(2)}$ be the distribution functions of measures $\mu_1$, $\mu_2$,
respectively ($F^i(x)=\mu_i([0,x])$, $i=1,2$). Let $\mu=\lambda\mu_1+(1-%
\lambda)\mu_2$, $0<\lambda<1$, and let $F$ be the distribution function of $%
\mu$:
\begin{equation*}
F(x)=\mu([0,x])\ .
\end{equation*}
We are looking for map $\eta$ satisfying $\tau_1\le\eta\le\tau_2$ preserving
distribution function $F$ (or equivalently measure $\mu$).

We introduce the function $s:[0,1/2]\to[1/2,1]$ which relates the branches $%
\eta_1,\eta_2$ of map $\eta$. Let
\begin{equation}  \label{eta2}
\eta_2(x)=\eta_1(s^{-1}(x))\ .
\end{equation}
Frobenius-Perron equation gives
\begin{equation*}
F(x)=F(\eta^{-1}_1(x))+1-F(\eta^{-1}_2(x))\ ,
\end{equation*}
or
\begin{equation*}
F(\eta_1(z))=F(z)+1-F(s(z))\ ,
\end{equation*}
or
\begin{equation*}
F(s(z))=1+F(z)-F(\eta_1(z))\ ,
\end{equation*}
which allows us to find $s$ once $\eta_1$ is given
\begin{equation}  \label{map_s}
s(z)=F^{-1}(1+F(z)-F(\eta_1(z)))\ .
\end{equation}
Thus, once we construct $\eta_1$ satisfying $\tau_{1,1}\le\eta_1\le%
\tau_{2,1} $ we obtain $\eta_2$ and have to check if it satisfies required
inequalities. We will show that $\eta_1$ can be chosen in such a way that $%
\eta_2$ is between $\tau_1$ and $\tau_2$.

Assumptions:
\begin{equation}  \label{ine}
\tau_{2,1}^{-1}\le\eta^{-1}_1\le\tau_{1,1}^{-1}\ ,
\end{equation}
(equivalent to $\tau_{1,1}\le\eta_1\le\tau_{2,1}$),

\begin{equation}  \label{convex}
F=\lambda F^{(1)}+(1-\lambda) F^{(2)}\ .
\end{equation}

Frobenius-Perron equation gives
\begin{equation*}
F^{(1)}(x)=F^{(1)}(\tau_{1,1}^{-1}(x))+1-F^{(1)}(\tau_{1,2}^{-1}(x))\ ,
\end{equation*}
\begin{equation*}
F^{(2)}(x)=F^{(2)}(\tau_{2,1}^{-1}(x))+1-F^{(2)}(\tau_{2,2}^{-1}(x))\ ,
\end{equation*}
\begin{equation*}
F(x)=F(\eta^{-1}_1(x))+1-F(\eta^{-1}_2(x))\ ,
\end{equation*}
or
\begin{equation}  \label{no2}
\begin{split}
F^{(1)}(\tau_{1,2}^{-1}(x))&=F^{(1)}(\tau_{1,1}^{-1}(x))+1-F^{(1)}(x)\ , \\
F^{(2)}(\tau_{2,2}^{-1}(x))&=F^{(2)}(\tau_{2,1}^{-1}(x))+1-F^{(2)}(x)\ , \\
F(\eta^{-1}_2(x))&=F(\eta^{-1}_1(x))+1-F(x)\ .
\end{split}%
\end{equation}
We want to show:
\begin{equation*}
\tau_{1,2}^{-1}(x)\le \eta^{-1}_2(x)\le \tau_{2,2}^{-1}(x) \ ,
\end{equation*}
or equivalently
\begin{equation}  \label{toshow}
F(\tau_{1,2}^{-1}(x))\le F(\eta^{-1}_2(x))\le F(\tau_{2,2}^{-1}(x)) \ .
\end{equation}

First, we will show that it is possible to choose $\eta_1$ in such a way
that
\begin{equation}  \label{firstpart}
F(\tau_{1,2}^{-1}(x))\le F(\eta^{-1}_2(x))\ .
\end{equation}
Using (\ref{convex}) and (\ref{no2}) we obtain an inequality equivalent to (%
\ref{firstpart})
\begin{equation}  \label{hej1}
\lambda F^{(1)}(\tau_{1,2}^{-1}(x))+(1-\lambda)
F^{(2)}(\tau_{1,2}^{-1}(x))\le F(\eta^{-1}_1(x))+1-F(x)\ ,
\end{equation}
and another also equivalent to (\ref{firstpart})
\begin{equation}  \label{hej2}
\begin{split}
\lambda& \left[F^{(1)}(\tau_{1,1}^{-1}(x))+1- F^{(1)}(x)\right]+(1-\lambda)
F^{(2)}(\tau_{1,2}^{-1}(x)) \\
&\le \lambda
F^{(1)}(\eta^{-1}_1(x))+(1-\lambda)F^{(2)}(\eta^{-1}_1(x))+1-\lambda
F^{(1)}(x)-(1-\lambda) F^{(2)}(x)\ ,
\end{split}%
\end{equation}
or
\begin{equation}  \label{hej3}
\begin{split}
\lambda& F^{(1)}(\tau_{1,1}^{-1}(x))+(1-\lambda) F^{(2)}(\tau_{1,2}^{-1}(x))
\\
&\le \lambda
F^{(1)}(\eta^{-1}_1(x))+(1-\lambda)F^{(2)}(\eta^{-1}_1(x))-(1-\lambda)[
F^{(2)}(x)-1]\ ,
\end{split}%
\end{equation}
or, using again (\ref{no2})
\begin{equation}  \label{hej3}
\begin{split}
\lambda& F^{(1)}(\tau_{1,1}^{-1}(x))+(1-\lambda) F^{(2)}(\tau_{1,2}^{-1}(x))
\\
&\le \lambda
F^{(1)}(\eta^{-1}_1(x))+(1-\lambda)F^{(2)}(\eta^{-1}_1(x))-(1-\lambda)\left[%
F^{(2)}(\tau_{2,1}^{-1}(x)) -F^{(2)}(\tau_{2,2}^{-1}(x))\right]\ ,
\end{split}%
\end{equation}
or, still equivalent to (\ref{firstpart})
\begin{equation}  \label{hej4}
\begin{split}
\lambda& \left[F^{(1)}(\tau_{1,1}^{-1}(x))-F^{(1)}(\eta^{-1}_1(x))\right]%
+(1-\lambda)\left[F^{(2)}(\tau_{2,1}^{-1}(x))-F^{(2)}(\eta^{-1}_1(x))\right]
\\
&\le (1-\lambda)\left[ F^{(2)}(\tau_{2,2}^{-1}(x))-F^{(2)}(%
\tau_{1,2}^{-1}(x))\right]\ .
\end{split}%
\end{equation}
Since $\tau_{2,2}^{-1}\ge \tau_{1,2}^{-1}$ the right hand side is positive
independently of a choice of $\eta_1$. Since $\tau_{2,1}^{-1}\le
\eta_1^{-1}\le \tau_{1,1}^{-1}$, the first term on the left hand side is
positive (and zero for $\eta_1=\tau_{1,1}$) and the second term is negative.
This shows that there is an interval $I_1(x)$ touching $\tau_{1,1}(x)$ such
that if we choose $\eta_1(x)$ in this interval, then the inequality (\ref%
{hej4}) and thus (\ref{firstpart}) will be satisfied.

Now, we will show the second part of (\ref{toshow}), i.e., that it is
possible to choose $\eta_1$ in such a way that
\begin{equation}  \label{secpart}
F(\eta^{-1}_2(x))\le F(\tau_{2,2}^{-1}(x))\ .
\end{equation}
As above we change (\ref{secpart}) equivalently to eventually obtain
\begin{equation}  \label{hej5}
\begin{split}
(1-\lambda)&\left[F^{(2)}(\eta^{-1}_1(x))-F^{(2)}(\tau_{2,1}^{-1}(x))\right]
+\lambda \left[F^{(1)}(\eta^{-1}_1(x))-F^{(1)}(\tau_{1,1}^{-1}(x))\right]+ \\
&\le \lambda\left[ F^{(1)}(\tau_{2,2}^{-1}(x))-F^{(1)}(\tau_{1,2}^{-1}(x))%
\right]\ .
\end{split}%
\end{equation}
Since $\tau_{2,2}^{-1}\ge \tau_{1,2}^{-1}$ the right hand side is positive
independently of a choice of $\eta_1$. Since $\tau_{2,1}^{-1}\le
\eta_1^{-1}\le \tau_{1,1}^{-1}$, the first term on the left hand side is
positive (and zero for $\eta_1=\tau_{2,1}$) and the second term is negative.
This shows that there is an interval $I_2(x)$ touching $\tau_{2,1}(x)$ such
that if we choose $\eta_1(x)$ in this interval, then the inequality (\ref%
{hej5}) and thus (\ref{secpart}) will be satisfied.

Now, we will show that there exists an $\eta_1$ satisfying inequalities (\ref%
{ine}), (\ref{hej4}) and (\ref{hej5}).

Note that left hand size of (\ref{hej4})\ $\ = \ -$ left hand side of (\ref%
{hej5}), while both right hand sides are positive. We choose $\eta_1$ such
that the left hand side of (\ref{hej4}) ( and also of (\ref{hej5})) is 0.
Then, both inequalities are satisfied. This is possible, at least if we
assume that $F^{(1)}$, $F^{(2)}$ are continuous.

Here is how we solve for $\eta_1$. We have
\begin{equation*}
\lambda F^{(1)}(\eta^{-1}_1(x))+(1-\lambda)
F^{(2)}(\eta^{-1}_1(x))=(1-\lambda) F^{(2)}(\tau_{2,1}^{-1}(x))+\lambda
F^{(1)}(\tau_{1,1}^{-1}(x))\ ,
\end{equation*}
or
\begin{equation*}
F(\eta^{-1}_1(x))=(1-\lambda) F^{(2)}(\tau_{2,1}^{-1}(x))+\lambda
F^{(1)}(\tau_{1,1}^{-1}(x))\ ,
\end{equation*}
\begin{equation}  \label{eta1inv}
\eta^{-1}_1(x)=F^{-1}\left((1-\lambda) F^{(2)}(\tau_{2,1}^{-1}(x))+\lambda
F^{(1)}(\tau_{1,1}^{-1}(x))\right)\ ,
\end{equation}
assuming $F$ is strictly increasing.

Now, we show that $\eta_1(x)$ defined by equation (\ref{eta1inv}) satisfies
assumptions \ref{ine}, i.e., $\eta_1$ is located between $\tau_{1,1}$ and $%
\tau_{2,1}$. First,
\begin{equation*}
\tau_{2,1}^{-1}\le\eta^{-1}_1
\end{equation*}
is equivalent to
\begin{equation*}
\tau_{2,1}^{-1}\le F^{-1}\left((1-\lambda)
F^{(2)}(\tau_{2,1}^{-1}(x))+\lambda F^{(1)}(\tau_{1,1}^{-1}(x))\right)\ ,
\end{equation*}
or
\begin{equation*}
F(\tau_{2,1}^{-1})\le (1-\lambda) F^{(2)}(\tau_{2,1}^{-1}(x))+\lambda
F^{(1)}(\tau_{1,1}^{-1}(x))\ ,
\end{equation*}
or
\begin{equation*}
\lambda F^{(1)}(\tau_{2,1}^{-1})+ (1-\lambda) F^{(2)}(\tau_{2,1}^{-1})\le
(1-\lambda) F^{(2)}(\tau_{2,1}^{-1}(x))+\lambda F^{(1)}(\tau_{1,1}^{-1}(x))\
,
\end{equation*}
or
\begin{equation*}
F^{(1)}(\tau_{2,1}^{-1})\le F^{(1)}(\tau_{1,1}^{-1}(x))\ ,
\end{equation*}
which is true since $\tau_{2,1} \geq \tau_{1,1}$ and both are increasing. On
the other hand,
\begin{equation*}
\eta^{-1}_1\le\tau_{1,1}^{-1}
\end{equation*}
is equivalent to
\begin{equation*}
F^{-1}\left((1-\lambda) F^{(2)}(\tau_{2,1}^{-1}(x))+\lambda
F^{(1)}(\tau_{1,1}^{-1}(x))\right)\le\tau_{1,1}^{-1}\ ,
\end{equation*}
or
\begin{equation*}
(1-\lambda) F^{(2)}(\tau_{2,1}^{-1}(x))+\lambda
F^{(1)}(\tau_{1,1}^{-1}(x))\le F(\tau_{1,1}^{-1})\ ,
\end{equation*}
or
\begin{equation*}
(1-\lambda) F^{(2)}(\tau_{2,1}^{-1}(x))+\lambda
F^{(1)}(\tau_{1,1}^{-1}(x))\le \lambda F^{(1)}(\tau_{1,1}^{-1}) +
(1-\lambda) F^{(2)}(\tau_{1,1}^{-1})\ ,
\end{equation*}
or
\begin{equation*}
F^{(2)}(\tau_{2,1}^{-1}(x)) \le F^{(2)}(\tau_{1,1}^{-1})\ ,
\end{equation*}
which is true by the same reason as in the first case.

Note that $\eta_1(x)$ defined by equation (\ref{eta1inv}) is increasing and
continuous since all the functions defining $\eta_1(x)$ there are increasing
and continuous. Also,
\begin{equation*}
\eta^{-1}_1(0)=F^{-1}\left((1-\lambda) F^{(2)}(\tau_{2,1}^{-1}(0))+ \lambda
F^{(1)}(\tau_{1,1}^{-1}(0))\right)=F^{-1}(0)=0\ ,
\end{equation*}
\begin{equation*}
\eta^{-1}_1(1)=F^{-1}\left((1-\lambda) F^{(2)}(\tau_{2,1}^{-1}(1))+ \lambda
F^{(1)}(\tau_{1,1}^{-1}(1))\right)=F^{-1}\left(F(1/2)\right)=1/2\ .
\end{equation*}
Actually, for the tent like map we are considering, it is not necessarily
assumed that the maximum is achieved at $1/2$, it can be any point in $(0,1)
$.

\section{Main result}

Our main result is the following theorem.

\begin{theorem}
Let $\Gamma$ be a set valued map from the unit interval into itself. Assume
that the lower edge and upper edge maps $\tau_1$ and $\tau_2$ are piecewise
monotonic, their invariant distribution functions $F^{(1)}$ and $F^{(2)}$
are continuous and for any $0<\lambda<1$ the convex combination $F=\lambda
F^{(1)}+(1-\lambda) F^{(2)}$ is a homeomorphism of the unit interval. Then,
there exists a piecewise monotonic selection $\eta$, $\tau_1\le\eta\le\tau_2$%
, preserving distribution function $F$.
\end{theorem}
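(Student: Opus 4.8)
The plan is to carry out a multi-branch version of the tent-map construction of Section~3, whose core is the explicit formula (\ref{eta1inv}). \emph{Set-up and reduction.} First replace densities by measures: put $\mu=\lambda\mu_1+(1-\lambda)\mu_2$, so that $F$ is the distribution function of $\mu$; since $F$ is a homeomorphism of $[0,1]$ it is strictly increasing, $F^{-1}$ is available, and $\mu$ is non-atomic of full support. Refine the monotonicity partitions of $\tau_1$ and $\tau_2$ to a common partition $0=a_0<\dots<a_N=1$, on each interval of which both maps are monotone; denote by $\tau_{1,k},\tau_{2,k}$ the branches on $[a_{k-1},a_k]$. The favourable case --- which I would handle first and which already covers the examples of Sections~2--3 --- is that in which, on each such interval, $\tau_{1,k}$ and $\tau_{2,k}$ have the same monotonicity type and a common image interval $\Delta_k$ (for instance when both maps are full-branch); note this forces $\tau_1=\tau_2$ at every $a_k$.

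\emph{The selection in the favourable case.} Generalizing (\ref{eta1inv}), define $\eta$ by its inverse branches: for $x\in\Delta_k$,
\[
\eta_k^{-1}(x)=F^{-1}\!\left(\lambda\,F^{(1)}\!\big(\tau_{1,k}^{-1}(x)\big)+(1-\lambda)\,F^{(2)}\!\big(\tau_{2,k}^{-1}(x)\big)\right),
\]
and set $\eta=(\eta_k^{-1})^{-1}$ on $[a_{k-1},a_k]$. Then $\tau_{1,k}\le\eta_k\le\tau_{2,k}$ follows exactly as in the lines after (\ref{eta1inv}): it reduces to order comparisons of $\eta_k^{-1}$ with $\tau_{1,k}^{-1}$ and $\tau_{2,k}^{-1}$ after composing with $F^{(1)}$ or $F^{(2)}$, and these hold because $\tau_{1,k}\le\tau_{2,k}$, because $F^{(1)},F^{(2)}$ are nondecreasing, and because $\lambda+(1-\lambda)=1$. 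Continuity and monotonicity of the data make $\eta$ a piecewise monotonic selection, and the identities $\eta_k^{-1}(\partial\Delta_k)=\{a_{k-1},a_k\}$ --- cf.\ the computation of $\eta_1^{-1}(0)$ and $\eta_1^{-1}(1)$ in Section~3, which uses continuity of $F^{(1)},F^{(2)}$ and invertibility of $F$ --- show the branches fit together over all of $[0,1]$. Finally $\eta$ preserves $F$: writing $\mu(\eta^{-1}([0,x]))$ as a sum of the $\mu$-measures of the inverse-branch images and substituting the displayed formula, the sum splits into $\lambda$ times the analogous Frobenius--Perron count for $(\tau_1,\mu_1)$ plus $(1-\lambda)$ times that for $(\tau_2,\mu_2)$; since $\tau_i$ preserves $\mu_i$, these equal $F^{(1)}(x)$ and $F^{(2)}(x)$, so the total is $F(x)$.

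\emph{The general case, and the main obstacle.} When $\tau_1$ and $\tau_2$ are not compatible in the above sense --- different monotonicity types over a common piece, or branch images that do not match, both of which can occur for, e.g., the lower and upper boundary curves of a region $G_i$ --- the favourable construction does not apply directly on those pieces. I would handle such a ``bad'' piece by subdividing it further and inserting into $\eta$ a bounded number of extra monotone arcs that stay in the tube $\tau_1\le\,\cdot\,\le\tau_2$ and transport exactly the amount of $\mu$-mass needed to keep the local Frobenius--Perron balance, leaving the definition unchanged on the good pieces. Making this precise --- arranging matching images, keeping $\eta$ a genuine piecewise monotonic selection, and controlling the mass it carries over the bad pieces --- is the step I expect to be the main technical obstacle; the favourable case, by contrast, is a routine generalization of Section~3.
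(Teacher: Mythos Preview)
Your ``favourable case'' is exactly the paper's construction, and your verification that $\eta$ lies between $\tau_1$ and $\tau_2$ and preserves $F$ matches the paper's argument line for line. The difference is in scope: you require the branches $\tau_{1,k}$ and $\tau_{2,k}$ to share a common image $\Delta_k$, and you flag the case of mismatched images as part of the ``main technical obstacle'' to be handled by inserting extra arcs. The paper does not need this restriction and does not insert any extra pieces. Its device is simple: replace the ordinary inverse of a monotone branch $h:[a,b]\to[0,1]$ by an \emph{extended inverse} $\overline{h^{-1}}:[0,1]\to[a,b]$, equal to $h^{-1}$ on the image of $h$ and constant (equal to the appropriate endpoint $a$ or $b$) outside it. With this convention the defining formula
\[
\overline{\eta_k^{-1}}(x)=F^{-1}\!\left(\lambda\,F^{(1)}\!\big(\overline{\tau_{1,k}^{-1}}(x)\big)+(1-\lambda)\,F^{(2)}\!\big(\overline{\tau_{2,k}^{-1}}(x)\big)\right)
\]
makes sense for all $x\in[0,1]$ regardless of whether the images of $\tau_{1,k}$ and $\tau_{2,k}$ coincide; the constant portions produce vertical segments in the graph of $\eta_k^{-1}$, which one simply discards to recover a genuine monotone branch of $\eta$. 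Both the sandwich inequality and the Frobenius--Perron computation go through unchanged, because on the constant portions the corresponding $F^{(i)}$-increments vanish. So the mismatched-image case is not an obstacle at all, and your proposed ad~hoc surgery is unnecessary.

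As for branches with opposite monotonicity types on a common interval: the paper does not treat this either. It simply asserts, after passing to a common refinement, that $\tau_{1,j}$ and $\tau_{2,j}$ ``share the same monotonicity'' and proceeds; this is effectively an implicit hypothesis coming from the picture of $\tau_1,\tau_2$ as lower and upper boundaries of the regions $G_i$. So on that point you are not missing anything the paper supplies.
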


\begin{proof}
Without loss of generality, we assume the partition points of $\tau_1$ and $%
\tau_2$ are common $a_0=0<a_1<a_2<\cdots<a_{m}=1$. Let $I_j=[a_{j-1},a_j]$, $%
j=1,2,3,\ldots,m$. On each interval $I_j$, $\tau_{1,j}:=\tau_1|_{_{I_j}}$
and $\tau_{2,j}:=\tau_2|_{_{I_j}}$ share the same monotonicity, where we
understand $\tau_{1,j}$ and $\tau_{1,j}$ as the natural extensions of pieces
of $\tau_1$ and $\tau_2$, respectively. Moreover, we expect our selections
also have the above properties.

For any interval $[a,b]\subseteq [0,1]$, given a monotone continuous
function $h:[a,b]\rightarrow [0,1]$, we define its extended inverse as
follows. We introduce some notations. Let
\begin{equation*}
h^{\max}=\max\left\{h(x)|x\in[a,b]\right\}\, ,
\end{equation*}
and
\begin{equation*}
h^{\min}=\min\left\{h(x)|x\in[a,b]\right\}\, .
\end{equation*}
If $h$ is increasing, then its extended inverse is defined as
\begin{equation*}
\overline{h^{-1}}(x)=%
\begin{cases}
a \ , \text{\ for\ } \ x\in[0,h^{\min}]\ ; \\
h^{-1}(x) \ , \text{\ for\ } \ x\in[h^{\min},h^{\max}]\ ; \\
b \ , \text{\ for\ } \ x\in[h^{\max},1]\ .%
\end{cases}%
\end{equation*}
If $h$ is decreasing, then its extended inverse is defined as
\begin{equation*}
\overline{h^{-1}}(x)=%
\begin{cases}
b \ , \text{\ for\ } \ x\in[0,h^{\min}]\ ; \\
h^{-1}(x) \ , \text{\ for\ } \ x\in[h^{\min},h^{\max}]\ ; \\
a \ , \text{\ for\ } \ x\in[h^{\max},1]\ . \\
\end{cases}%
\end{equation*}

We define the extended inverse of each branch of $\eta$ by
\begin{equation}  \label{mulineq8}
\overline{\eta_{j}^{-1}}(x)=F^{-1}\left(\lambda F^{(1)}(\overline{%
\tau_{1,j}^{-1}}(x))+(1-\lambda) F^{(2)}(\overline{\tau_{2,j}^{-1}}%
(x))\right)\, ,
\end{equation}
where $j=1,2,3,\ldots,m$. $\eta$ defined in this way after the vertical
segments removed has the same number of branches as $\tau_1$ and $\tau_2$,
and each branch of it also has the same monotonicity.

First, we show that $\eta$ is located in between $\tau_1$ and $\tau_2$. For
some $j\in \{1,2,\ldots,m\}$, we show this for the case that $\tau_{1,j}$
and $\tau_{2,j}$ are increasing, the proof for the case that $\tau_{1,j}$
and $\tau_{2,j}$ are decreasing is similar. We need to show the following:
\begin{equation*}
\overline{\tau_{2,j}^{-1}}(x)\leq \overline{\eta_{j}^{-1}}(x)\leq \overline{%
\tau_{1,j}^{-1}}(x)\, ,
\end{equation*}
which is equivalent to
\begin{equation*}
F\left(\overline{\tau_{2,j}^{-1}}(x)\right)\leq F\left(\overline{%
\eta_{j}^{-1}}(x)\right)\leq F\left(\overline{\tau_{1,j}^{-1}}(x)\right)\, ,
\end{equation*}
or using (\ref{mulineq8})
\begin{eqnarray*}
\lambda F^{(1)}(\overline{\tau_{2,j}^{-1}}(x))&+&(1-\lambda)F^{(2)}(%
\overline{\tau_{2,j}^{-1}}(x)) \\
&&\leq\lambda F^{(1)}\overline{(\tau_{1,j}^{-1}}(x))+ (1-\lambda)F^{(2)}(%
\overline{\tau_{2,j}^{-1}}(x)) \\
&&\leq\lambda F^{(1)}(\overline{\tau_{1,j}^{-1}}(x))+ (1-\lambda)F^{(2)}(%
\overline{\tau_{1,j}^{-1}}(x))\, ,
\end{eqnarray*}
which is true since $\overline{\tau_{2,j}^{-1}}(x)\leq \overline{%
\tau_{1,j}^{-1}}(x)$ since $\tau_{1,j}$ and $\tau_{2,j}$ are increasing.

\begin{figure}[h]
\centering
\includegraphics[bb=20 118 575
673,width=3.92in,height=3.92in,keepaspectratio]{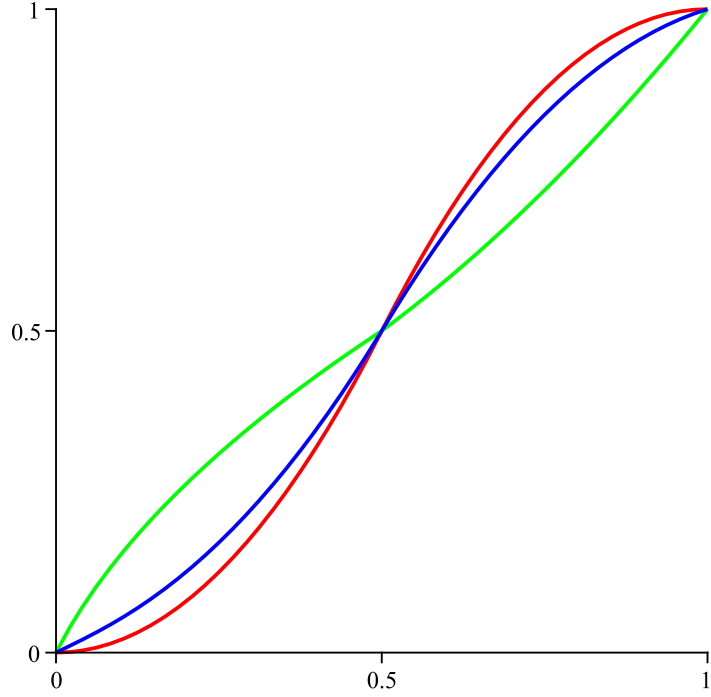}
\caption{Invariant distribution functions $F^{(1)}$, $F^{(2)}$ and $F$.}
\label{fig:distribution_functions_new}
\end{figure}

Second, for any $x\in[0,1]$, using the previous notations, we have
\begin{equation*}
\overline{\eta^{-1}}([0,x])=\bigcup_{j=1}^m[{\overline{\eta_{j}^{-1}}}^l(x),{%
\overline{\eta_{j}^{-1}}}^r(x)]\, ,
\end{equation*}
\begin{equation*}
\overline{\tau_1^{-1}}([0,x])=\bigcup_{j=1}^m[{\overline{\tau_{1,j}^{-1}}}%
^l(x),{\overline{\tau_{1,j}^{-1}}}^r(x)]\, ,
\end{equation*}
\begin{equation*}
\overline{\tau_2^{-1}}([0,x])=\bigcup_{j=1}^m[{\overline{\tau_{2,j}^{-1}}}%
^l(x),{\overline{\tau_{2,j}^{-1}}}^r(x)]\, ,
\end{equation*}
where the bars over inverses of maps $\eta^{-1}$, $\tau_1^{-1}$ and $%
\tau_2^{-1}$ imply that the extended inverses are used for each branch. Note
that all the three maps have the same monotonicity for each corresponding
branch. Moreover, for some $x$, the interval notations appearing on the
right hand side of above preimages may only contain one point. For example,
if $x\in[0,\tau_{1,j}^{\min}]$ where $j\in \{1,2,\ldots,m\}$, then ${%
\overline{\tau_{1,j}^{-1}}}^l(x)={\overline{\tau_{1,j}^{-1}}}^r(x)=a_{j-1}$
when $\tau_{1,j}$ is increasing.

\begin{figure}[h]
\centering
\includegraphics[bb=20 118 575
673,width=3.92in,height=3.92in,keepaspectratio]{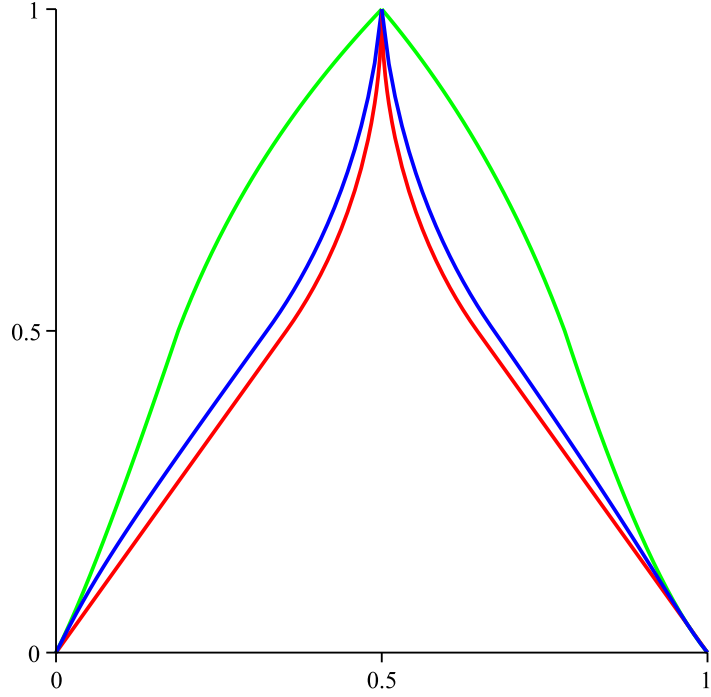}
\caption{Transformations $\protect\tau_1$, $\protect\tau_2$ and $\protect%
\eta $.}
\label{fig:eta_map_new_asym}
\end{figure}

For maps $\tau_{1}$ and $\tau_{2}$, Frobenius-Perron equation gives
\begin{equation*}
F^i(x)=\sum_{j=1}^m \left[F^i\left({\overline{\tau_{i,j}^{-1}}}^r(x)\right)-
F^i\left({\overline{\tau_{i,j}^{-1}}}^l(x)\right)\right]\, ,
\end{equation*}
$i=1,2.$

Now, using (\ref{mulineq8}) and the fact that $\eta$, $\tau_1$ and $\tau_2$
have the same monotonicity on each interval $I_j$, $j\in \{1,2,\ldots,m\}$,
we have
\begin{eqnarray*}
F\left({\overline{\eta_{j}^{-1}}}^r(x)\right)-F\left({\overline{\eta_{j}^{-1}%
}}^l(x)\right) &=&\lambda F^{(1)}\left({\overline{\tau_{1,j}^{-1}}}%
^r(x)\right)+(1-\lambda)F^{(2)}\left({\overline{\tau_{2,j}^{-1}}}^r(x)\right)
\\
&&-\left[\lambda F^{(1)}\left({\overline{\tau_{1,j}^{-1}}}%
^l(x)\right)+(1-\lambda)F^{(2)}\left({\overline{\tau_{2,j}^{-1}}}^l(x)\right)%
\right] \\
&=&\lambda \left[F^{(1)}\left({\overline{\tau_{1,j}^{-1}}}^r(x)\right)-
F^{(1)}\left({\overline{\tau_{1,j}^{-1}}}^l(x)\right)\right] \\
&&+ (1-\lambda)\left[F^{(2)}\left({\overline{\tau_{1,j}^{-1}}}^r(x)\right)-
F^{(2)}\left({\overline{\tau_{1,j}^{-1}}}^l(x)\right)\right]\, .
\end{eqnarray*}
Thus, denoting the measure corresponding to $F$ by $\mu$, we have
\begin{eqnarray*}
\mu\left(\eta^{-1}([0,x])\right)&=&\sum_{j=1}^m F\left({\overline{%
\eta_{j}^{-1}}}^r(x)\right)-F\left({\overline{\eta_{j}^{-1}}}^l(x)\right) \\
&=&\lambda \sum_{j=1}^m\left[F^{(1)}\left({\overline{\tau_{1,j}^{-1}}}%
^r(x)\right)- F^{(1)}\left({\overline{\tau_{1,j}^{-1}}}^l(x)\right)\right] \\
&&+ (1-\lambda)\sum_{j=1}^m\left[F^{(2)}\left({\overline{\tau_{1,j}^{-1}}}%
^r(x)\right)- F^{(2)}\left({\overline{\tau_{1,j}^{-1}}}^l(x)\right)\right] \\
&=&\lambda F^{(1)}(x)+(1-\lambda) F^{(2)}(x) \\
&=& F(x)\, ,
\end{eqnarray*}
which implies indeed the map $\eta$ defined in (\ref{mulineq8}) preserves $F$%
. This completes the proof.
\end{proof}


Let $\varphi_1$ and $\varphi_2$ be homeomorphisms of $[0,1]$ onto itself
defined as follows:
\begin{equation*}
\varphi_1(x)=%
\begin{cases}
2x^2, & \ \text{for}\ 0\le x< 1/2\ ; \\
1-2(1-x)^2, & \ \text{for}\ 1/2\le x\le 1 \ ,%
\end{cases}%
\end{equation*}

\begin{equation*}
\varphi_2(x)=%
\begin{cases}
-\frac 14+\frac 14\sqrt{1+16x}, & \ \text{for}\ 0\le x< 1/2\ ; \\
\frac 12\left(x^2+\frac 12(x+1)\right), & \ \text{for}\ 1/2\le x\le 1 \ .%
\end{cases}%
\end{equation*}

Then, we define maps $\tau_1=\varphi_1^{-1}\circ T\circ \varphi_1$ and $%
\tau_2=\varphi_2^{-1}\circ T\circ \varphi_2$, where $T$ is the tent map. The
graphs of $\tau_1$ and $\tau_2$ are shown in Figure \ref%
{fig:eta_map_new_asym}. The invariant distribution function for $\tau_1$ is $%
F^{(1)}=\varphi_1$, and the invariant distribution function for $\tau_2$ is $%
F^{(2)}=\varphi_2$ (Corollary 3). Let $\alpha=3/4$ and $F=\alpha
F^{(1)}+(1-\alpha)F^{(2)}$. The distribution functions are shown in Figure %
\ref{fig:distribution_functions_new}. In Figure \ref{fig:eta_map_new_asym}
we show map $\eta$ constructed using formulas (\ref{mulineq8}).


\section{ Another method of partially solving the problem}

In this section we generally assume that lower and upper edge maps $\tau_1$
and $\tau_2$ are conjugated and use this conjugation to construct an "in
between" selection.

\begin{proposition}
Let $\tau_1$ and $\tau_2$ be interval $[0,1]$ maps preserving densities $f_1$
and $f_2$, correspondingly, and conjugated by a diffeomorphism (or a at
least absolutely continuous homeomorphism) $h$:
\begin{equation*}
\tau_2=h^{-1}\circ \tau_1\circ h\ .
\end{equation*}
Then,
\begin{equation*}
f_2=(f_1\circ h) \cdot|h^{\prime }|\ .
\end{equation*}
\end{proposition}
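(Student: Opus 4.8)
The plan is to verify directly that the push-forward of $f_2\,dx$ under the conjugacy relation equals $f_1\,dx$, which is what invariance of $f_1$ under $\tau_1$ together with the change of variables $h$ forces. The key observation is that if $\tau_1$ preserves the measure $d\mu_1 = f_1\,dx$, then $\tau_2 = h^{-1}\circ\tau_1\circ h$ preserves the pulled-back measure $h^*(\mu_1)$, i.e. the measure $\nu$ defined by $\nu(A) = \mu_1(h(A))$. By uniqueness of the invariant density (or simply by identifying densities), it then suffices to compute the density of $h^*(\mu_1)$ with respect to Lebesgue measure and check it is $(f_1\circ h)\cdot|h'|$.

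First I would set up the change-of-variables computation: for any measurable $A\subseteq[0,1]$,
\begin{equation*}
\nu(A) = \mu_1(h(A)) = \int_{h(A)} f_1(y)\,dy = \int_A f_1(h(x))\,|h'(x)|\,dx\, ,
\end{equation*}
where the last equality is the substitution $y = h(x)$, valid because $h$ is an absolutely continuous homeomorphism of $[0,1]$ (so $h'$ exists a.e., is integrable, and the substitution formula holds; monotonicity of $h$ handles the orientation, giving $|h'|$). This identifies the density of $\nu$ as $(f_1\circ h)\cdot|h'|$. Second, I would check that $\tau_2$ preserves $\nu$: for measurable $B$,
\begin{equation*}
\nu(\tau_2^{-1}(B)) = \mu_1\bigl(h(\tau_2^{-1}(B))\bigr) = \mu_1\bigl(\tau_1^{-1}(h(B))\bigr) = \mu_1(h(B)) = \nu(B)\, ,
\end{equation*}
using $h\circ\tau_2^{-1} = \tau_1^{-1}\circ h$ (which follows from $\tau_2 = h^{-1}\circ\tau_1\circ h$) and the $\tau_1$-invariance of $\mu_1$. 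Hence $\nu$ is an absolutely continuous $\tau_2$-invariant measure with density $(f_1\circ h)\cdot|h'|$. Since $\tau_2$ is assumed to preserve the density $f_2$, and the statement is to be read as an identification of this invariant density, we conclude $f_2 = (f_1\circ h)\cdot|h'|$ a.e.

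The main obstacle — really the only delicate point — is justifying the change-of-variables formula under the weak hypothesis that $h$ is merely an absolutely continuous homeomorphism rather than a diffeomorphism. For a $C^1$ diffeomorphism this is standard; for an absolutely continuous monotone homeomorphism one needs the fact that absolute continuity is exactly the condition under which $\int_{h(A)} g\,dy = \int_A (g\circ h)\,|h'|\,dx$ holds for all integrable $g$, which is a classical real-analysis result (it is where the Luzin $(N)$ property and the a.e. differentiability of monotone functions enter). I would cite this rather than prove it. A secondary subtlety is that if one wants to pin down $f_2$ uniquely (not just produce \emph{an} invariant density), one should invoke uniqueness of the absolutely continuous invariant measure for $\tau_2$; but since the proposition is phrased in terms of the given preserved densities $f_1,f_2$, it is cleanest to simply present the computation above as the verification that $(f_1\circ h)\cdot|h'|$ is $\tau_2$-invariant and therefore equals $f_2$.
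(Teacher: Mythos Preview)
Your argument is correct and is precisely the standard verification that the paper declines to spell out: the paper's entire proof of this proposition is the phrase ``Generally known.'' Your push-forward computation and the check that $h\circ\tau_2^{-1}=\tau_1^{-1}\circ h$ are exactly the routine steps one would supply, and your remarks on the change-of-variables formula for absolutely continuous homeomorphisms and on the implicit uniqueness assumption are appropriate caveats that the paper simply leaves unstated.
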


\begin{proof}
Generally known.
\end{proof}

\begin{corollary}
If $\tau_1=T$, the tent map, then
\begin{equation*}
f_2=|h^{\prime }|\ ,
\end{equation*}
or equivalently
\begin{equation*}
h(x)= \pm \int_0^x f_2(t) dt\ .
\end{equation*}
\end{corollary}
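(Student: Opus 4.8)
The plan is to read this Corollary as a direct specialization of the preceding Proposition, the only genuine input being the identification of the invariant density of the tent map. First I would recall that the tent map $T$ (increasing of slope $2$ on $[0,1/2]$, decreasing of slope $-2$ on $[1/2,1]$) preserves Lebesgue measure: its Frobenius--Perron operator is $(Pf)(x)=\tfrac12 f(\tfrac x2)+\tfrac12 f(1-\tfrac x2)$, so the constant density $f_1\equiv 1$ is a fixed point, and since $T$ is piecewise expanding this is \emph{the} (unique) invariant density. Hence in the Proposition we may take $\tau_1=T$ with $f_1\equiv 1$.

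Next I would simply substitute into the conclusion of the Proposition: $f_2=(f_1\circ h)\cdot|h'|=1\cdot|h'|=|h'|$. When $h$ is only an absolutely continuous homeomorphism (rather than a diffeomorphism) this identity is understood to hold Lebesgue-a.e., which is all that is required since $f_2$ is a density and enters only through integration.

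For the ``equivalently'' clause I would integrate, using that a homeomorphism of $[0,1]$ onto itself is monotone and, being absolutely continuous, satisfies the fundamental theorem of calculus $h(x)=h(0)+\int_0^x h'(t)\,dt$. If $h$ is increasing then $h(0)=0$ and $h'=|h'|=f_2$ a.e., so $h(x)=\int_0^x f_2(t)\,dt$; if $h$ is decreasing then $h(0)=1$ and $h'=-|h'|=-f_2$, so $h(x)=1-\int_0^x f_2(t)\,dt=\int_x^1 f_2(t)\,dt$ (using $\int_0^1 f_2=1$). In either case $h'=\pm f_2$ and $h$ is, up to orientation, the distribution function of $f_2$, which is the asserted formula $h(x)=\pm\int_0^x f_2(t)\,dt$.

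I do not expect a real obstacle here. The only point needing a line of care is invoking the fundamental theorem of calculus in the merely absolutely continuous case, so that ``$h'=\pm f_2$'' is an a.e. statement while the integral formula remains a genuine equality of continuous functions, together with the sign bookkeeping according to whether $h$ preserves or reverses orientation; the decreasing case shows the ``$\pm$'' in the displayed formula is slightly informal and is really the statement $h'=\pm f_2$. Everything else is substitution into the already-established Proposition.
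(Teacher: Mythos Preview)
Your proposal is correct and matches the paper's approach: the paper gives no explicit proof, treating the Corollary as an immediate specialization of the preceding Proposition with $f_1\equiv 1$ for the tent map, which is exactly what you do. Your additional remarks on the absolutely continuous case and the sign bookkeeping are sound and simply make explicit what the paper leaves implicit.
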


\begin{proposition}
\label{Pr:conj} Let $\tau_1$ be a piecewise linear Markov map of interval $%
[0,1]$ onto itself preserving density $f_1$. This means that there are
partition $\mathcal{P}$ such that $\mathcal{P}=\{I_i\}_{i=1}^n$ and $%
\tau_1(I_j)$ is a union of consecutive elements of $\mathcal{P}$ for any $%
1\le j\le n$. Then, $f_1$ is piecewise constant $f_1=\sum_{i=1}^n
c_i\chi_{I_i}$. Let $\tau_2$ be a map conjugated to $\tau_1$ by a
diffeomorphism (or a at least absolutely continuous homeomorphism) $h$
preserving partition $\mathcal{P}$,
\begin{equation*}
\tau_2=h^{-1}\circ \tau_1\circ h\ .
\end{equation*}
Then, $\tau_2$ preserves the density
\begin{equation*}
f_2= |h^{\prime }|\cdot\sum_{i=1}^n c_i\chi_{I_i}\ ,
\end{equation*}
and
\begin{equation*}
|h^{\prime }|=f_2\cdot \sum_{i=1}^n \frac 1{c_i}\chi_{I_i}\ .
\end{equation*}
\end{proposition}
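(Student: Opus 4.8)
The plan is to reduce everything to Proposition stated just before (the conjugacy-transfers-densities result) together with Corollary-style computations, and to verify the Markov structure is preserved so that the piecewise-constant form of $f_1$ is legitimate. First I would recall why $f_1$ is piecewise constant: since $\tau_1$ is piecewise linear and Markov with respect to $\mathcal{P}=\{I_i\}_{i=1}^n$, the Frobenius--Perron operator $P_{\tau_1}$ maps the finite-dimensional space $V=\mathrm{span}\{\chi_{I_i}\}$ into itself — on each $I_j$ the map is affine with constant slope, so $P_{\tau_1}\chi_{I_j}$ is a sum of $\chi_{I_i}$'s scaled by the reciprocal of that slope over the $I_i$ contained in $\tau_1(I_j)$, which by the Markov condition is exactly a union of partition elements. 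Hence the (normalized) fixed point $f_1$ of $P_{\tau_1}$ lies in $V$, i.e.\ $f_1=\sum_{i=1}^n c_i\chi_{I_i}$ for suitable constants $c_i\ge 0$.

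Next I would invoke the preceding Proposition directly: since $\tau_2=h^{-1}\circ\tau_1\circ h$ with $h$ an (absolutely continuous) homeomorphism of $[0,1]$, that Proposition gives $f_2=(f_1\circ h)\cdot|h'|$ as the density preserved by $\tau_2$. Now use the hypothesis that $h$ \emph{preserves the partition} $\mathcal{P}$, meaning $h(I_i)=I_i$ for each $i$ (or at least $h$ maps each $I_i$ onto itself). Then for $x\in I_i$ we have $h(x)\in I_i$, so $f_1(h(x))=c_i=f_1(x)$; that is, $f_1\circ h=f_1$ as functions. Substituting, $f_2=f_1\cdot|h'|=|h'|\cdot\sum_{i=1}^n c_i\chi_{I_i}$, which is the first displayed formula. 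For the second formula, on each $I_i$ we have $f_2=c_i|h'|$ with $c_i>0$ (one should note $c_i>0$ on the support; where $c_i=0$ the formula is read as an identity on the support of $f_2$, or one restricts to the ergodic component), hence $|h'|=f_2/c_i$ on $I_i$, i.e.\ $|h'|=f_2\cdot\sum_{i=1}^n\frac{1}{c_i}\chi_{I_i}$, as claimed.

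The one genuine point requiring care — the main obstacle, such as it is — is justifying that $\tau_2=h^{-1}\circ\tau_1\circ h$ actually \emph{preserves} the measure with density $f_2$, and in particular that $f_2\in L^1$ integrates to $1$. The preceding Proposition already records the change-of-density formula under conjugation, so invariance is immediate from it: if $\mu_1$ is $\tau_1$-invariant then $h_*^{-1}\mu_1$ is $\tau_2$-invariant, and $h_*^{-1}\mu_1$ has density $(f_1\circ h)|h'|$ by the standard change-of-variables formula for absolutely continuous homeomorphisms (which is exactly what "$h$ is at least an absolutely continuous homeomorphism" is there to guarantee, so that $h'$ exists a.e.\ and the substitution formula is valid). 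Normalization $\int_0^1 f_2 = \int_0^1 (f_1\circ h)|h'| = \int_0^1 f_1 = 1$ follows by the same substitution. So the only thing beyond routine bookkeeping is the observation $f_1\circ h = f_1$, which is where the partition-preserving hypothesis on $h$ does its work and is what makes the two clean closed-form expressions possible; everything else is a direct appeal to the Proposition and a per-interval algebraic manipulation.
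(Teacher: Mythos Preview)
Your proof is correct and follows essentially the same approach as the paper: invoke the preceding proposition to get $f_2=(f_1\circ h)\,|h'|$, then use that $h$ preserves $\mathcal{P}$ so $\chi_{I_i}\circ h=\chi_{I_i}$ and hence $f_1\circ h=f_1$. The paper's proof is in fact just that one line; your additional remarks on why $f_1$ is piecewise constant and on normalization are fine elaborations but not required.
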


\begin{proof}
We have
\begin{equation*}
f_2=(f_1\circ h) \cdot|h^{\prime }|=\sum_{i=1}^n c_i\chi_{I_i}\circ h
\cdot|h^{\prime }|=|h^{\prime }|\cdot\sum_{i=1}^n c_i\chi_{I_i}\ .
\end{equation*}
\end{proof}

\begin{figure}[h]
\centering
\includegraphics[bb=20 118 575
673,width=3.92in,height=3.92in,keepaspectratio]{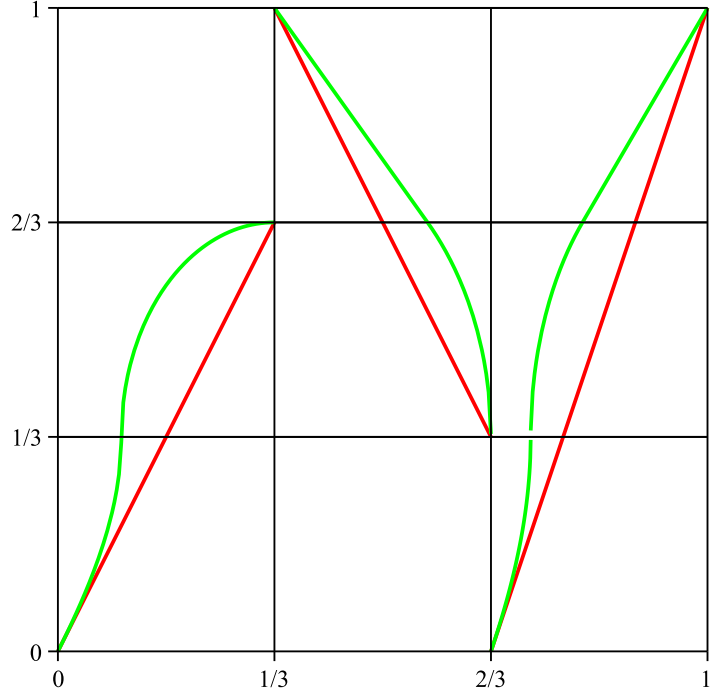}
\caption{Piecewise linear Markov map $\protect\tau_1$ and conjugated map $%
\protect\tau_2$}
\label{fig:Markov_example}
\end{figure}

Let us now consider a more general multi-valued map $T$ with lower edge
selection $\tau_1$ and upper edge selection $\tau_2$ as in Figure \ref%
{fig:Markov_example} or \ref{fig:counter_1}. Map $T$ is typically
infinitely valued $T(x)=[\tau_1(x),\tau_2(x)]$, $x\in[0,1]$. If $\tau_1$
preserves a density $f_1$ and $\tau_2$ preserves a density $f_2$, then we
ask whether for any convex combination $f=\alpha\cdot f_1+(1-\alpha)\cdot
f_2 $, $0<\alpha<1$, we can find a selection of $T$ which preserves the
density $f$. We give conditions under which this holds.

\begin{theorem}
\label{Th:linear} Let $\tau_1$ be a piecewise linear Markov (on partition $%
\mathcal{P}$) map of interval $[0,1]$ onto itself preserving density $%
f_1=\sum_{i=1}^n c_i\chi_{I_i}$. Let $\tau_2$ be a map conjugated to $\tau_1$
by an increasing absolutely continuous homeomorphism $h$ preserving
partition $\mathcal{P}$,
\begin{equation*}
\tau_2=h^{-1}\circ \tau_1\circ h\ .
\end{equation*}
Let density $f_2$ be $\tau_2$ invariant. Then, for any convex combination $%
f=\alpha\cdot f_1+(1-\alpha)\cdot f_2$, $0<\alpha<1$, we can find a
selection of $T$ which preserves the density $f$.
\end{theorem}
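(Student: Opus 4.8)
The plan is to mimic the construction of the ``in between'' map $\eta$ from the Main Theorem, but to exploit the extra rigidity coming from the Markov conjugacy so that the resulting selection can be written down explicitly and its invariant density identified with $f=\alpha f_1+(1-\alpha)f_2$. First I would set $F^{(1)}(x)=\int_0^x f_1$ and $F^{(2)}(x)=\int_0^x f_2$, note that both are increasing homeomorphisms of $[0,1]$ (since $f_1$ is piecewise constant with positive values $c_i$ and $f_2=|h'|\sum c_i\chi_{I_i}$ is a.e. positive), and record that, by Proposition~\ref{Pr:conj}, $F^{(2)}=F^{(1)}\circ h$, i.e.\ $h=(F^{(1)})^{-1}\circ F^{(2)}$. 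The convex combination $F=\alpha F^{(1)}+(1-\alpha)F^{(2)}$ is then automatically a homeomorphism of $[0,1]$, so the hypotheses of the Main Theorem are met and we already have a selection $\eta$, $\tau_1\le\eta\le\tau_2$, preserving $F$; the only thing left to check is that $F$ is \emph{absolutely continuous}, so that ``preserving $F$'' is the same as ``preserving the density $f=F'=\alpha f_1+(1-\alpha)f_2$''. Absolute continuity of $F$ is immediate: $F^{(1)}$ is absolutely continuous (it is piecewise linear), $F^{(2)}=F^{(1)}\circ h$ is absolutely continuous because $h$ is an absolutely continuous homeomorphism and $F^{(1)}$ is Lipschitz, and a convex combination of absolutely continuous functions is absolutely continuous. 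Hence $F'=f$ a.e.\ and the $\eta$ produced by formula~(\ref{mulineq8}) already preserves $f$.

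With that observation the theorem is in principle proved, but I would additionally give the more transparent ``conjugacy'' description promised in the introduction, since it is the real content here. Put $g=(F^{(1)})^{-1}\circ F$, an increasing absolutely continuous homeomorphism of $[0,1]$ with $g=\mathrm{id}$ when $\alpha=1$ and $g=h$ when $\alpha=0$; equivalently $F=F^{(1)}\circ g$. I claim the selection can be taken to be $\tau:=g^{-1}\circ\tau_1\circ g$. Indeed $\tau$ is conjugate to $\tau_1$ by $g$, so by Proposition~1 it preserves the density $(f_1\circ g)\cdot|g'| = (F^{(1)}\circ g)' = F' = f$, exactly as wanted. So the whole problem reduces to the single inequality
\begin{equation*}
\tau_1(x)\ \le\ (g^{-1}\circ\tau_1\circ g)(x)\ \le\ \tau_2(x),\qquad x\in[0,1],
\end{equation*}
which, after applying $F^{(1)}$ (increasing) and recalling $F^{(1)}\circ g^{-1}=F\circ(F^{(1)})^{-1}\circ\ \cdots$ — more directly, after conjugating everything by $g$ and using $\tau_2=h^{-1}\circ\tau_1\circ h$ with $h=g\circ(\text{something})$ — becomes a statement comparing $\tau_1$ with its conjugate by the homeomorphism $g$ that interpolates between the identity and $h$.

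The step I expect to be the main obstacle is precisely this interpolation inequality: showing that conjugating the \emph{piecewise linear Markov} map $\tau_1$ by the intermediate homeomorphism $g$, with $\mathrm{id}\le g\le h$ (or $h\le g\le\mathrm{id}$, depending on orientation), keeps the graph of $g^{-1}\circ\tau_1\circ g$ trapped between the graphs of $\tau_1=\mathrm{id}^{-1}\circ\tau_1\circ\mathrm{id}$ and $\tau_2=h^{-1}\circ\tau_1\circ h$. This is where the Markov hypothesis is used: on each partition element $I_i$ the map $\tau_1$ is affine and $\tau_1(I_i)$ is a union of partition elements, so the conjugated maps all send $I_i$ onto the same union of elements and one can compare them branch by branch; on each branch the comparison reduces to monotonicity of the map $g\mapsto g^{-1}\circ(\text{affine})\circ g$ in the partial order on homeomorphisms, which I would verify by a direct (if slightly fiddly) argument using that $g$ lies pointwise between $\mathrm{id}$ and $h$ and that $h$ itself realizes $\tau_2$ as such a conjugate. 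I would also note in passing that $g$ need not be differentiable at the partition points, which is why we only claim $\tau$ is a measurable (piecewise monotonic) selection and only ask $h$ to be an absolutely continuous homeomorphism rather than a diffeomorphism.
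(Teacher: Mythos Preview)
Your first paragraph already contains a complete and correct proof: once you observe that $F^{(1)}$ is a (piecewise linear, hence Lipschitz) homeomorphism, that $F^{(2)}=F^{(1)}\circ h$ is continuous, and that $F=\alpha F^{(1)}+(1-\alpha)F^{(2)}$ is strictly increasing (because $F^{(1)}$ is and $\alpha>0$), the Main Theorem applies verbatim and yields a selection preserving $F$; absolute continuity of $F$ then identifies the invariant measure with $f\,dx$. This route is genuinely different from the paper's, which does \emph{not} invoke the Main Theorem at all.

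The paper instead executes your second, conjugacy-based approach to completion, and the step you flag as ``the main obstacle'' turns out to be a two-line computation once one writes $g$ explicitly. Because $F^{(1)}$ is \emph{affine} on each $I_i$ and $h$ fixes each $I_i$, your formula $g=(F^{(1)})^{-1}\circ F$ simplifies on $I_i$ to
\[
g(x)=(F^{(1)}|_{I_i})^{-1}\bigl(\alpha F^{(1)}(x)+(1-\alpha)F^{(1)}(h(x))\bigr)=\alpha x+(1-\alpha)h(x),
\]
so globally $g=\alpha\,\mathrm{id}+(1-\alpha)h$. The interpolation inequality then drops out from the affineness of $\tau_1$ on $I_i$ together with $x,h(x)\in I_i$: for $\tau_1\le\tau$ one checks the equivalent $g\circ\tau_1\le\tau_1\circ g$ via
\[
g(\tau_1(x))=\alpha\tau_1(x)+(1-\alpha)h(\tau_1(x))\le\alpha\tau_1(x)+(1-\alpha)\tau_1(h(x))=\tau_1(\alpha x+(1-\alpha)h(x))=\tau_1(g(x)),
\]
using only $h\circ\tau_1\le\tau_1\circ h$ (i.e.\ $\tau_1\le\tau_2$); the inequality $\tau\le\tau_2$ is the symmetric computation. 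So the ``slightly fiddly'' monotonicity-in-the-conjugacy argument you anticipate is replaced by a direct use of piecewise linearity---this is precisely where the Markov/piecewise-linear hypothesis is spent.

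In summary: your reduction to the Main Theorem is a legitimate and shorter proof given that theorem is already in hand; the paper's self-contained conjugacy argument buys the explicit formula $\tau=g^{-1}\circ\tau_1\circ g$ with $g=\alpha\,\mathrm{id}+(1-\alpha)h$, which is the ``more transparent description'' you were aiming for but did not finish.
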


\begin{proof}
By Proposition \ref{Pr:conj} we have $|h^{\prime }|=f_2\cdot \sum_{i=1}^n
\frac 1{c_i}\chi_{I_i}\ ,$ so assuming that $h$ is increasing
\begin{equation*}
h(x)=\int_0^x f_2(t)\cdot \sum_{i=1}^n \frac 1{c_i}\chi_{I_i}(t) dt \ .
\end{equation*}
Using Proposition \ref{Pr:conj} again we see that if define conjugation
\begin{eqnarray*}
g(x)&=\int_0^x f(t)\cdot \sum_{i=1}^n \frac 1{c_i}\chi_{I_i}(t) dt= \int_0^x
\left(\alpha\cdot f_1(t)+(1-\alpha)\cdot f_2(t)\right)\cdot \sum_{i=1}^n
\frac 1{c_i}\chi_{I_i}(t) dt \\
&=\int_0^x \left(\alpha\cdot 1+(1-\alpha)\cdot \left(f_2(t)\cdot
\sum_{i=1}^n \frac 1{c_i}\chi_{I_i}(t)\right)\right) dt =\alpha\cdot x+
(1-\alpha)\cdot h(x)\ ,
\end{eqnarray*}
then $\tau=g^{-1}\circ \tau_1\circ g$ preserves density $f$. Note, that $g$
is also increasing.

We will prove that $\tau_1\le\tau\le\tau_2$. Consider $x\in I_i\in\mathcal{P}
$. Let $\beta=1-\alpha$. $\tau_1$ is piecewise linear on $I_i$, so $%
\tau_1(\alpha x+\beta y)=\alpha \tau_1(x)+\beta\tau_1(x)$, $x,y\in I_i$.
First, we will prove that $\tau_1\le \tau$ or equivalently $g\circ\tau_1\le
\tau_1\circ g$. For $x\in I_i$ we have
\begin{equation*}
g(\tau_1(x))=\alpha\tau_1(x)+\beta h(\tau_1(x))\le \alpha \tau_1(x)+\beta
\tau_1(h(x))=\tau_1(g(x))\ .
\end{equation*}
We used the inequality $h\circ\tau_1\le \tau_1\circ h$ equivalent to $%
\tau_1\le h^{-1}\circ\tau_1\circ h=\tau_2$.

Now, we prove that $\tau\le \tau_2$ or that $g^{-1}\circ \tau_1\circ g\le
\tau_2$ or equivalently that $\tau_1\circ g\le g\circ \tau_2$. Again, we
consider $x\in I_i$:
\begin{equation*}
\tau_1(g(x))=\tau_1(\alpha x+\beta h(x))=\alpha \tau_1(x)+\beta \tau_1(h(x))
\ .
\end{equation*}
We also have
\begin{equation*}
g (\tau_2(x))=\alpha \tau_2(x) +\beta h(\tau_2(x))=\alpha \tau_2(x) +\beta
h(h^{-1}(\tau_1( h(x))))=\alpha \tau_2 +\beta \tau_1( h(x))\ .
\end{equation*}
Since $\tau_1\le \tau_2$ the proof is completed.
\end{proof}

In this example we show existence of the "in between" map $\tau$ in a
situation when the lower map is not onto. Let us consider the tent map
\begin{equation*}
\tau_2(x)=1-2|x-1/2|\ ,
\end{equation*}
and
\begin{equation*}
\tau_1(x)=%
\begin{cases}
4x^2 & \ ,\ \text{for} \ 0\le x<1/4\ , \\
2x-1/4 & \ ,\ \text{for} \ 1/4\le x<1/2\ , \\
-2x+7/4 & \ ,\ \text{for} \ 1/2\le x<3/4\ , \\
4(1-x)^2 & \ ,\ \text{for} \ 3/4\le x\le 1\ ,%
\end{cases}%
\end{equation*}
shown in Figure \ref{fig:counter_1}.

\begin{figure}[h]
\centering
\includegraphics[bb=20 118 575
673,width=3.92in,height=3.92in,keepaspectratio]{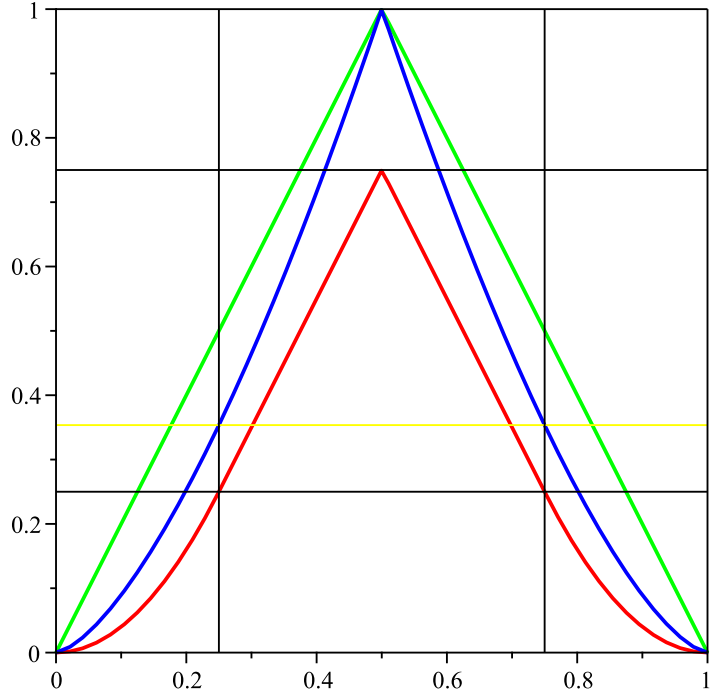}
\caption{Maps $\protect\tau_1$, $\protect\tau_2$ and map $\protect\tau$ we
are looking for.}
\label{fig:counter_1}
\end{figure}
The invariant densities are $f_2=1$ for $\tau_2$ and $f_1=2\chi_{[1/4,3/4]}$
for $\tau_2$. For any $0<\lambda<1$ their convex combination is
\begin{equation*}
f=\lambda f_1+(1-\lambda) f_2= (1-\lambda)\chi_{[0,1/4]\cup[3/4,1]%
}+(1+\lambda)\chi_{[1/4,3/4]}\ .
\end{equation*}
We are looking for map $\tau$ satisfying $\tau_1\le\tau\le\tau_2$ and
preserving $f$. We must have $\tau(0)=\tau(1)=0$ since both $\tau_1$ and $%
\tau_2$ satisfy these conditions. We also must have $\tau(1/2)=1$ since $f$
is supported on the whole $[0,1]$. We will look for a symmetric map $\tau$.

\begin{figure}[h]
\centering
\includegraphics[bb=20 118 575
673,width=3.92in,height=3.92in,keepaspectratio]{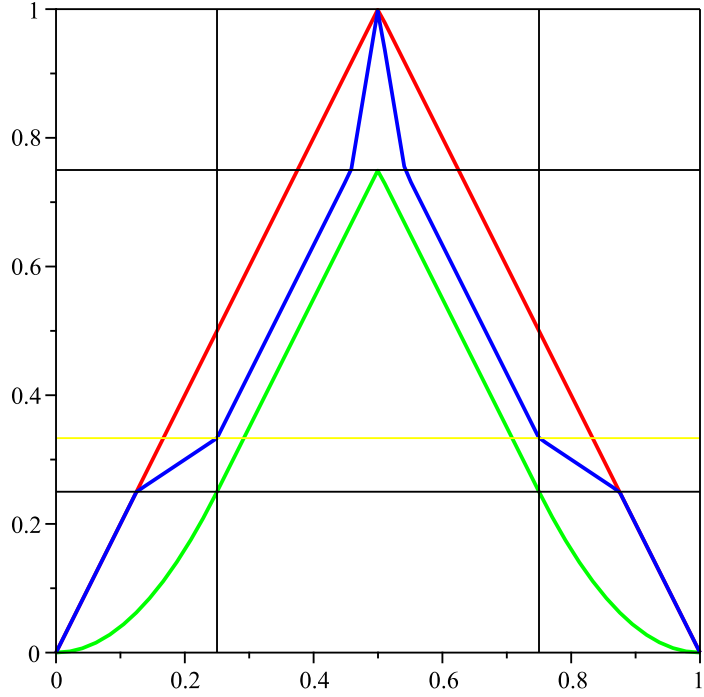}
\caption{Map $\protect\tau$ for $\protect\lambda=1/2$.}
\label{fig:counter_2}
\end{figure}
For $x\in[0,1/4]$ we have
\begin{equation*}
(1-\lambda)=\frac{(1-\lambda)}{\left|\tau^{\prime
}(\tau_{(1)}^{-1}(x))\right|} +\frac{(1-\lambda)}{\left|\tau^{\prime
}(\tau_{(2)}^{-1}(x))\right|}\ ,
\end{equation*}
or
\begin{equation*}
1=\frac{1}{\left|\tau^{\prime }(\tau_{(1)}^{-1}(x))\right|} +\frac{1}{%
\left|\tau^{\prime }(\tau_{(2)}^{-1}(x))\right|}\ .
\end{equation*}
Thus, by symmetry of $\tau$:
\begin{equation*}
|\tau^{\prime }(x)|=2 \ , \ \text{for}\ \ x\in [0,\tau_{(1)}^{-1}(1/4)]\cup
[1-\tau_{(1)}^{-1}(1/4),1] \ .
\end{equation*}
For $x\in[1/4,\tau(1/4)]$ we have
\begin{equation*}
(1+\lambda)= \frac{(1-\lambda)}{\left|\tau^{\prime
}(\tau_{(1)}^{-1}(x))\right|} +\frac{(1-\lambda)}{\left|\tau^{\prime
}(\tau_{(2)}^{-1}(x))\right|}\ ,
\end{equation*}
which, by symmetry of $\tau$, implies
\begin{equation*}
|\tau^{\prime }(x)|=2(1-\lambda)/(1+\lambda) \ , \ \text{for}\ \ x\in
[\tau_{(1)}^{-1}(1/4),1/4]\cup [3/4,1-\tau_{(1)}^{-1}(1/4)] \ .
\end{equation*}
For $x\in[\tau(1/4),3/4]$ we have
\begin{equation*}
(1+\lambda)= \frac{(1+\lambda)}{\left|\tau^{\prime
}(\tau_{(1)}^{-1}(x))\right|} +\frac{(1+\lambda)}{\left|\tau^{\prime
}(\tau_{(2)}^{-1}(x))\right|}\ ,
\end{equation*}
which, by symmetry of $\tau$, implies
\begin{equation*}
|\tau^{\prime }(x)|=2 \ , \ \text{for}\ \ x\in
[1/4,\tau_{(1)}^{-1}(3/4)]\cup [1-\tau_{(1)}^{-1}(3/4),3/4] \ .
\end{equation*}
For $x\in[3/4,1]$ we have
\begin{equation*}
(1-\lambda)= \frac{(1+\lambda)}{\left|\tau^{\prime
}(\tau_{(1)}^{-1}(x))\right|} +\frac{(1+\lambda)}{\left|\tau^{\prime
}(\tau_{(2)}^{-1}(x))\right|}\ ,
\end{equation*}
which, by symmetry of $\tau$, implies
\begin{equation*}
|\tau^{\prime }(x)|=2(1+\lambda)/(1-\lambda) \ , \ \text{for}\ \ x\in
[\tau_{(1)}^{-1}(3/4),1-\tau_{(1)}^{-1}(3/4)] \ .
\end{equation*}
In Figures \ref{fig:counter_2} and \ref{fig:counter_3} we present graphs of
map $\tau$ for $\lambda=1/2$ and $\lambda=1/10$, correspondingly. The slopes
are $2,2/3,2,6$ for the first and $2,18/11,2,22/9$ for the second.

\begin{figure}[h]
\centering
\includegraphics[bb=20 118 575
673,width=3.92in,height=3.92in,keepaspectratio]{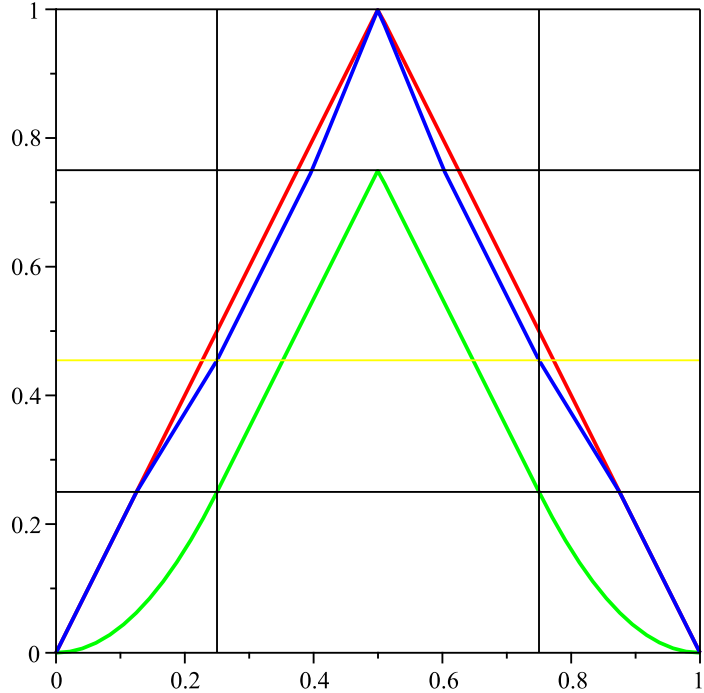}
\caption{Map $\protect\tau$ for $\protect\lambda=1/10$.}
\label{fig:counter_3}
\end{figure}

\section{ Multivalued Maps and Random Maps}

We define a random map to be a finite collection of maps as follows: let $%
T=(\tau _{1},\tau _{2},\dots ,\tau _{K};p_{1},p_{2},\dots ,p_{K})$, where $%
\tau _{k}$ are maps of an interval and $p_{k}$ are position dependent
probabilities, that is, $p_{k}(x)\geq 0$ for $k=1,2,....,K$ and $%
\sum_{k=1}^{K}p_{k}(x)=1$. At each step, the random map $T$ moves the point $%
x$ to $\tau _{k}(x)$ with probability $p_k(x)$. For fixed $\{\tau _{1},\tau
_{2},\dots ,\tau _{K}\}$, $T$ can have different invariant probability
density functions, depending on the choice of the (weighting)\ functions $%
\{p_{1},p_{2},\dots ,p_{K}\}$. Let $f_{k}$ be an invariant density of $\tau
_{k}$, $k=1,\dots ,K$. It is shown in \cite{BGR1,BGR2} that for any positive
constants $a_{k}$, $k=1,\dots ,K$, there exists a system of weighting
probability functions $p_{1},\dots ,p_{K}$ such that the density $%
f=a_{1}f_{1}+\dots +a_{K}f_{K}$ is invariant under the random map $T=\{\tau
_{1},\dots ,\tau _{K};p_{1},\dots ,p_{K}\},$ where
\begin{equation*}
p_{k}=\frac{a_{k}f_{k}}{a_{1}f_{1}+\dots +a_{K}f_{K}}\ \ ,\ \ k=1,2,\dots ,K,
\end{equation*}%
(It is assumed that $0/0=0$.)

Let us\ now consider a multivalued map consisting of a lower map $\tau _{1}$
and an upper map $\tau _{2},$ with density functions $f_{1}$ and $f_{2},$
respectively. Let $f$  be any convex combination of $f_{1}$ and $f_{2}.$
Then by the foregoing result we can construct a position dependent random
map on the graphs of $\tau _{1}$ and $\tau _{2}$ whose unique pdf is $f.$

A related problem  is to consider a piecewise expanding map $\tau
$ (a selection) between the graphs of $\tau _{1}$ and $\tau _{2}$ having density function
 $f.$ Can we find a probability function $p(x)$ such that the resulting random
map $T=(\tau _{1},\tau _{2};p,1-p)$ has $f$ as its density function?

In general this problem does not have a positive solution (see Example \ref{CEX} below).
In many cases the solution can be found.
A simple example of this situation  can be shown from Example \ref{counterEX}.
Let us consider the
triangle map, $\tau ,$ whose graph fits in between the graphs of $\tau _{1}$
and $\tau _{2}.$ It can be shown that $T=(\tau _{1},\tau _{2};0.75, 0.25)$ has
Lebesgue measure as its invariant measure.

Another, more general result in this direction, can be established by
considering $\tau _{1}$ to be a piecewise linear Markov map where $\tau _{2}$
is conjugated to $\tau _{1}$ by $g(x)=\alpha x+(1-\alpha )h(x),$ where $h$
conjugates the upper map $\tau _{2}$ to the lower map $\tau _{1}.$ Then $%
\tau $ has pdf $f$ which is a convex combination of $f_{1}$ and $f_{2}.$
Hence by the main result of \cite{BGR2}, we know there exists a position
dependent random map $T=(\tau _{1},\tau _{2};p,1-p)$ which has $f$ as its
pdf.

\begin{example}\label{CEX}
\end{example}

\begin{figure}[h] 
  \centering
  \includegraphics[bb=20 118 575 673,width=3.92in,height=3.92in,keepaspectratio]{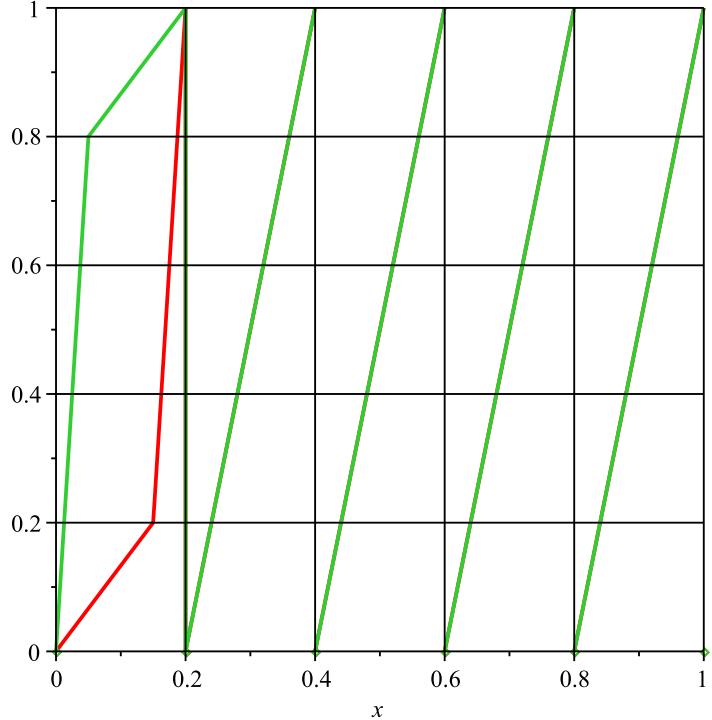}
  \caption{Maps in the counterexample}
  \label{fig:tau16}
\end{figure}

We consider the semi-Markov piecewise linear maps
$$\tau_1(x)=\begin{cases} \frac 43 x \ ,&\ \ \text{for}\ \ 0\le x<\frac 3{20}\ ;\\
                           16 x-\frac {11}5\ ,&\ \ \text{for}\ \ \frac 3{20}\le x<\frac 15\ ;\\
                           5x \ ({\rm mod})\ 1 \ ,&\ \ \text{for}\ \ \frac 15\le x\le 1\ ,\\
             \end{cases}
\ \ \ \ \ \tau_2(x)=\begin{cases} 16 x \ ,&\ \ \text{for}\ \ 0\le x<\frac 1{20}\ ;\\
                           \frac 43 x+\frac {11}{15}\ ,&\ \ \text{for}\ \ \frac 1{20}\le x<\frac 15\ ;\\
                           5x \ ({\rm mod})\ 1 \ ,&\ \ \text{for}\ \ \frac 15\le x\le 1\ .\\
             \end{cases}
$$

whose graphs are shown in Figure \ref{fig:tau16}. For the selection $\tau$ we choose the map $\tau(x)=5x$ (mod 1) preserving Lebesgue measure.

We will show that there is no solution, i.e., there is no position dependent random map based on
$\tau_1$, $\tau_2$ that  preserves  Lebesgue measure.

\begin{figure}[h] 
  \centering
  \includegraphics[bb=20 118 575 673,width=3.92in,height=3.92in,keepaspectratio]{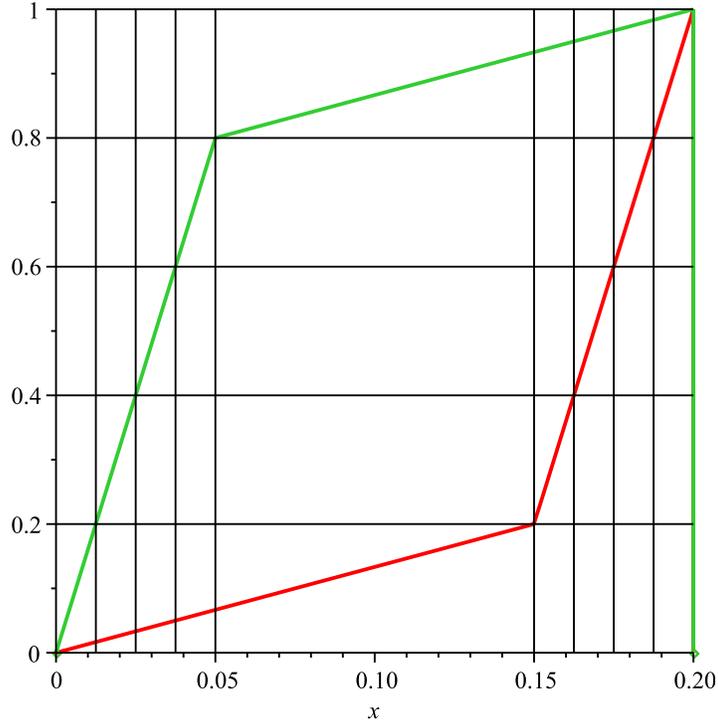}
  \caption{Maps $\tau_1$, $\tau_2$ on interval $[0,1/5]$ (not to scale).}
  \label{fig:tau16_part}
\end{figure}

 Let us consider $p_1(x)$ non-constant on $[0,1/5]$ (the values on $[1/5,1]$ are not important).
Let $\phi_1=\tau_1^{-1}$ on $[0,1/5]$, $\psi_1=\tau_2^{-1}$ on $[0,1/5]$, $\phi_i=\tau_1^{-1}=\tau_2^{-1}$ on $[(i-1)/5,i/5]$, $i=2,3,4,5$.
Frobenius-Perron operator of $\mathcal T$ is
\begin{equation}\label{equ}\begin{split}
P_{\mathcal T} f(x)&=\frac 34 p_1(\phi_1(x))f(\phi_1(x))\chi_{[0,1/5]}+
\frac 1{16}\left(1- p_1(\psi_1(x))\right)f(\psi_1(x))\chi_{[0,1/5]}\\
&+
\frac 1{16}p_1(\phi_1(x))f(\phi_1(x))\chi_{[1/5,4/5]}+
\frac 1{16}\left(1- p_1(\psi_1(x))\right)f(\psi_1(x))\chi_{[1/5,4/5]}\\
&+
\frac 1{16}p_1(\phi_1(x))f(\phi_1(x))\chi_{[4/5,1]}+
\frac 34\left(1- p_1(\psi_1(x))\right))f(\psi_1(x))\chi_{[4/5,1]}\\
&+
\frac 1 5 \left(f(\phi_2(x))+f(\phi_3(x))+f(\phi_4(x))+f(\phi_5(x))\right)\ .
\end{split}
\end{equation}
If we assume that $f=1$ is preserved by  $P_{\mathcal T}$, then equation (\ref{equ}) reduces to
\begin{equation}\label{equ2}\begin{split}
\frac 15&=\frac 34 p_1(\phi_1(x))\chi_{[0,1/5]}+
\frac 1{16}\left(1- p_1(\psi_1(x))\right)\chi_{[0,1/5]}\\
&+
\frac 1{16}p_1(\phi_1(x))\chi_{[1/5,4/5]}+
\frac 1{16}\left(1- p_1(\psi_1(x))\right)\chi_{[1/5,4/5]}\\
&+
\frac 1{16}p_1(\phi_1(x))\chi_{[4/5,1]}+
\frac 34\left(1- p_1(\psi_1(x))\right))\chi_{[4/5,1]}
\ .
\end{split}
\end{equation}

\begin{figure}[h] 
  \centering
  \includegraphics[bb=20 118 575 673,width=3.92in,height=3.92in,keepaspectratio]{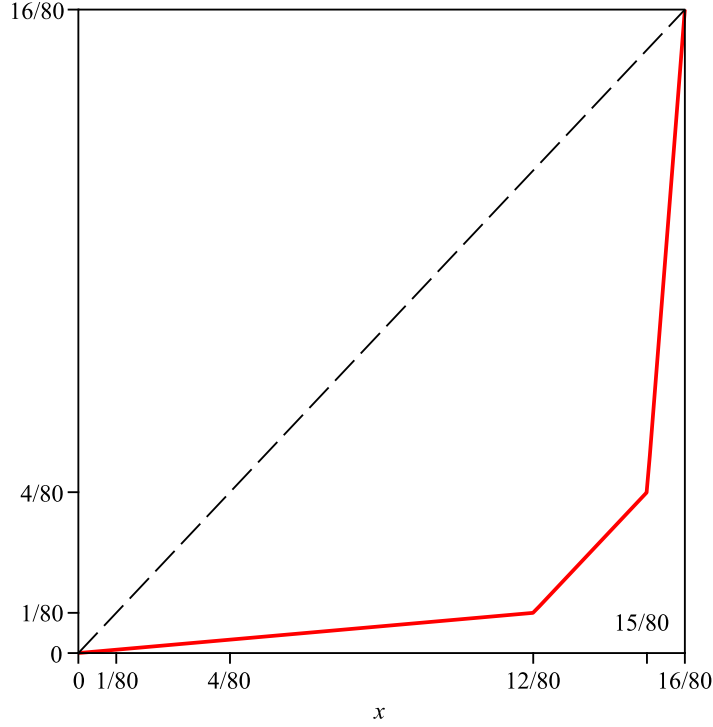}
  \caption{Map $\tau_{21}$ on $[0,1/5]$.}
  \label{fig:tau21_map}
\end{figure}

We introduce a map $\tau_{21}:[0,1/5]\to[0,1/5]$, $\tau_{21}=\tau_2^{-1}\circ\tau_1$, (see Fig. \ref{fig:tau21_map}) defined by
$$\tau_{21}(y)=\begin{cases}\frac 1{12} x \ ,&\ \ \text{for}\ \ 0\le x<\frac 3{20}\ ;\\
                            x-\frac {11}{80}\ ,&\ \ \text{for}\ \ \frac 3{20}\le x<\frac {15}{80}\ ;\\
                           12x -\frac {11}5 \ ,&\ \ \text{for}\ \ \frac {15}{80}\le x\le \frac{1}5\ .\\
             \end{cases}
$$
We assume that the solution $p_1$ exists and is a probability, i.e., its values are between 0 and 1. In particular it is defined on the interval $[1/80,4/80]$ and on interval $[12/80,15/80]$.
Let us  consider equation (\ref{equ2}) for $x\in [1/5,4/5]$.   We have

\begin{equation}\label{equ4}
\frac 15=
\frac 1{16}p_1(\phi_1(x))+
\frac 1{16}\left(1- p_1(\psi_1(x))\right)
\ ,
\end{equation}
or, substituting $x=\phi_1^{-1}(y)$, $y\in [12/80,15/80]$,
\begin{equation}\label{equ4}
\frac 15=
\frac 1{16}p_1(y)+
\frac 1{16}\left(1- p_1(\psi_1(\phi_1^{-1}(y)))\right)
\ .
\end{equation}
Using the equality $\tau_{21}=\psi_1\circ\phi_1^{-1}$, this can be rewritten as
$$ p_1(y)=\frac {11}5 +p_1(\tau_{21}(y))\ .$$
Note that $\tau_{21}([12/80,15/80])=[1/80,4/80]$. Whatever are the values of $p_1$ on $[1/80,4/80]$,
this implies that the values on $[12/80,15/80]$ are strictly larger than $1$.
This contradicts the assumptions on $p_1$.

\bigskip

\end{document}